\def\be#1\ee{\begin{equation}#1\end{equation}}
\newtheorem{thm}{Theorem}
\newtheorem{prop}[thm]{Proposition}
\newtheorem{cor}[thm]{Corollary}
\newtheorem{example}[thm]{Example}
\newtheorem{rem}[thm]{Remark}
\def\R{\mathbb{R}}
\def\N{{\mathbb N}}
\newenvironment{proof}[1][] {\noindent {\bf Proof#1:} }{\hspace*{\fill}$\square$\medskip\par}
\def\C{{\mathbb C}}
\def\D{{\mathbb D}}
\def\TI{I^{(\textsc{t})}}
\def\Ga{\Gamma}
\def\TGa{\Gamma^{(\textsc{t})}}
\def\CCC{\C^+}
\def\DDD{\D^+}
\def\eqg{\stackrel{(\textsc{t})}{\equiv}}
\title{On the completion of Skorokhod space}
\author{Mikhail Lifshits and Vladislav Vysotsky}
\begin{document}
\maketitle

\abstract{We consider the classical Skorokhod space $\D[0,1]$ and the space of continuous functions $\C[0,1]$ equipped
with the standard Skorokhod distance $\rho$.

It is well known that neither $(\D[0,1],\rho)$ nor $(\C[0,1],\rho)$ is complete.
We provide an explicit description of the corresponding completions. The elements of these completions
can be regarded as usual functions on $[0,1]$ except for a countable number of instants
where their values vary ``instantly".

\medskip

{\it MSC 2010}: Primary  54D35; secondary 46N30.

{\it Key words}:  Skorokhod space, Skorokhod distance, completion.

}

\section{Introduction}

We consider the classical Skorokhod space $\D:=\D[0,1]$  (the space of all {\it c\`adl\`ag}
functions\footnote{A function is c\`adl\`ag if it is right-continuous and has left limit at
every point.}
on $[0,1]$) and the space of continuous
functions $\C:=\C[0,1]$, both equipped with the standard simple Skorokhod distance
\[
  \rho(f,g):= \inf_{\gamma}\left[ \sup_{s\in[0,1]} |f(s)-g(\gamma(s))|+ \sup_{s\in[0,1]} |s-\gamma(s)|
  \right]
\]
where the infimum is taken over all continuous strictly increasing maps $\gamma$ acting from $[0,1]$
onto itself.

Recall that $\C$ is a closed linear subspace in $(\D,\rho)$. Indeed, if $g_n\in \C$, $f\in \D$, and
$\rho(f,g_n)\to 0$, as $n\to\infty$, then there exist continuous functions $\gamma_n$ such that
\[
   \lim_{n\to\infty} \sup_{s\in[0,1]} |f(s)-g_n(\gamma_n(s))|
   \le  \lim_{n\to\infty} \left[ \rho(f,g_n) + n^{-1}\right] =0.
\]
Therefore, $f$ is a uniform limit of continuous functions $g_n\circ \gamma_n$, hence, $f\in\C$.

The space $\D$ equipped with the Skorokhod topology generated by $\rho$ is a de facto standard
framework in the theory of stochastic processes with jumps, such as L\'evy processes, random walks,
etc.\ (see Skorokhod~\cite{Skorokhod1956,Skorokhod1957}).

It is well known that neither $(\C,\rho)$ nor $(\D,\rho)$ is complete (see Example~\ref{example:triang}
below). Usually this issue is solved by introducing the other, more delicate, distance generating the same
topology (see e.g.\ Billingsley~\cite[Section 14]{Bil}). The other option is to enlarge the spaces $\C$
and $\D$ completing them with respect to $\rho$. The goal of this paper is to describe the completions explicitly.

We mention a related study by Whitt~\cite[Section 15]{Whitt}, aimed to enlarge the space $\D$
in order to let more sequences converge (in the Skorokhod topologies $M_1$ and $M_2$).
Whitt's motivation was in queueing theory.

The completions of the spaces $(\C,\rho)$ and $(\D,\rho)$ appeared, respectively,
in the important functional large deviations principles of Mogulskii for random
walks~\cite{Mogulskii1976} 
and L{\'e}vy processes~\cite{Mogulskii1993}. 
Both papers used the completed spaces
without caring about their nature. However, it  is not quite clear how to work in such abstract setting,
e.g.\ how to calculate the large deviations rate function on the entire completed space.
\medskip

We finish this introduction with a simple but representative and instructive example.

\begin{example} \label{example:triang}
{\rm
Consider a triangular function $g(x):=(1-|x|)_+$ and introduce a family of functions
$\{g_\theta \}_{\theta >2}$ in $\C$ given by
\[
   g_\theta(s):= g\left(\theta(s-\tfrac 12)\right), \qquad 0\le s\le 1.
\]
By using a piecewise linear variable change it is straightforward to show that
\[
   \rho\left(g_{\theta_1},g_{\theta_2}\right)\le \left|\frac{1}{\theta_1}- \frac{1}{\theta_2}\right|
   \to 0, \qquad \textrm{as } \theta_1,\theta_2 \to \infty.
\]
Hence, $\{g_\theta\}_\theta$ has  the Cauchy property, as $\theta\to\infty$. However, it is clear that
there is no limit  $\lim_{\theta\to \infty} g_\theta$ in $(\D,\rho)$ because
\[
   \lim_{\theta\to\infty} g_{\theta}(s) = \begin{cases}1,& s=\tfrac 12,\\
   0,& s\not= \tfrac 12.
   \end{cases}
\]

Notice that every function $g_\theta$ takes all values between zero and one going first up, then down.
Informally, one may say that the limiting  ``function" should take all these values in the same order
(up and down) at the single time instant $s_*=\tfrac 12$. We call this paradoxical behaviour an
``{\it instanton}" and formalize it in the following sections.
}
\end{example}

\section{Notation and construction}

\subsection{Turbofunctions}
Let us denote $I:=[0,1]$  and $\TI:=[0,1]$, stressing the following subtle but important difference. We consider
$I$ as a usual time interval equipped with the standard distance
while $\TI$ is considered as a {\it topological} space, without any distance on it.

Distinguishing between $I$ and $\TI$ is an important part of our notation system. These objects coincide
as sets but they are equipped with different structures and have different meaning.
The set $I$ denotes the usual time interval (clock) and one may measure the distance
between the time instants, see e.g. the second term in the definition of  Skorokhod distance $\rho$.
On the contrary, $\TI$ is not considered as a true time interval, and one never measures the distance between its elements.
We use $\TI$ just as a technical tool for parametrisation, as one does in the definition of a path in general
topology.
We will be rather consequent in distinguishing notation for the elements of two sets,
using $t\in\TI$ and $s\in I$.
\medskip

Use the standard notation $\C[\TI]$ and $\D[\TI]$ for the spaces of continuous and c\`adl\`ag functions on $\TI$, respectively.


We denote by $\Ga$ the class of all increasing homeomorphisms of $I$, i.e. strictly
increasing continuous functions acting from $I$ onto $I$. Similarly, $\TGa$ denotes
the class of all increasing homeomorphisms of $\TI$.
Denote by $\Sigma$ the class of all continuous {\it non-decreasing} mappings
acting from $\TI$ onto~$I$.

The {\it extended Skorokhod space} is defined as a Cartesian product $\DDD:=\D[\TI]\times \Sigma$.
Its elements, the pairs
\[
   F^{\sigma}:=(F,\sigma), \qquad F\in \D[\TI], \ \sigma\in \Sigma,
\]
will be called {\it turbofunctions}. Similarly, put $\CCC:=\C[\TI]\times \Sigma$.

If $\sigma\in\Sigma$ is {\it strictly} monotone, hence invertible, then every turbofunction $F^\sigma$
may be {\it visualized} as a usual function $\widehat{F^\sigma}\in\D[I]$ defined by
\begin{equation} \label{eqn:visual}
  \widehat{F^\sigma}(s):= F(\sigma^{-1}(s)), \qquad s\in I.
\end{equation}

If $\sigma$ is not strictly monotone, such simple visualization is not possible. In this case,
there exists a finite or countable number of $s\in I$
such that $\sigma^{-1}(s)$ is a non-degenerate interval in $\TI$.
This corresponds to an {\it instanton} at time $s$, meaning  that $F^\sigma$ takes all values
$\{F(t): t\in\sigma^{-1}(s)\}$
``instantly'' at time $s$.

In the opposite direction, every function $f\in \D[I]$ can be interpreted as a turbofunction
\be \label{fplus}
   f^+:=(f\circ \varsigma)^{\varsigma},
\ee
where $\varsigma\in\Sigma$ is defined by $\varsigma(t):=t$.
Here and elsewhere the symbol $\circ$ stands for superposition of mappings.

In the following we will equip $\DDD$ with a relevant Skorokhod-type semi-distance and show that, after
the natural factorization, $\DDD$ can be interpreted as the completion of $(\D,\rho)$.

For continuous functions the situation is similar. If $f\in \C$, then $f^+\in \CCC$ and the natural factorization of $\CCC$ can be interpreted as the completion of $(\C,\rho)$.

\subsection{Distance and factorizations}

Let us define the Skorokhod
semi-distance\footnote{Semi-distance $\rho(\cdot,\cdot)$ is a
non-negative symmetric function satisfying triangle inequality and $\rho(x,x)=0$ but allowing
$\rho(x,y)=0$ for $x\not= y$.}
$\rho^+$ on turbofunctions in $\DDD$ by
\[
  \rho^+\left(F_1^{\sigma_1}, F_2^{\sigma_2} \right)
  := \inf_{\gamma\in \TGa}
  \left[
      \sup_{t\in\TI} |(F_1\circ\gamma)(t)-F_2(t)| + \sup_{t\in\TI} |(\sigma_1\circ\gamma)(t)-\sigma_2(t)|
  \right].
\]
For the restriction of $\rho^+$ on $\CCC$, we can replace the suprema in the above definition
by the maxima; however, this will play no role in the following.

It is easy to show that all properties of semi-distance are verified.
Indeed, the symmetry follows from
\begin{eqnarray*}
  &&
   \sup_{t\in\TI} |(F_1\circ\gamma)(t)-F_2(t)| + \sup_{t\in\TI} |(\sigma_1\circ\gamma)(t)-\sigma_2(t)|
\\
   &=&
    \sup_{\tau\in\TI} |F_1(\tau)-F_2(\gamma^{-1}(\tau))| + \sup_{\tau\in\TI}
    |\sigma_1(\tau)-\sigma_2(\gamma^{-1}(\tau))|,
\end{eqnarray*}
where we used the variable change $t=\gamma^{-1}(\tau)$.
To prove the triangle inequality
\be \label{triangular}
   \rho^+\left(F_1^{\sigma_1},F_3^{\sigma_3}\right) \le
   \rho^+\left(F_1^{\sigma_1},F_2^{\sigma_2}\right) +
   \rho^+\left(F_2^{\sigma_2},F_3^{\sigma_3}\right),
\ee
note that for all $t\in \TI$ and all  $\gamma_{12}, \gamma_{23}\in \TGa$ we have
\[
     |(F_1\circ\gamma_{12}\circ\gamma_{23} )(t)-F_3(t)|
     \le
     |(F_1\circ\gamma_{12}\circ\gamma_{23} )(t)-(F_2\circ \gamma_{23})(t)| + |(F_2\circ \gamma_{23})(t)-F_3(t)|.
\]
Taking supremum over $t \in \TI$ and using the variable change $\tau=\gamma_{23}(t)$
in the first supremum on the right-hand side, we get
\begin{eqnarray*}
  &&  \sup_{t\in\TI} |(F_1\circ\gamma_{12}\circ \gamma_{23} )(t)-F_3(t)|
\\
  &\le&
        \sup_{\tau\in\TI} |(F_1\circ\gamma_{12})(\tau)- F_2(\tau)| +
         \sup_{t\in\TI} |(F_2\circ \gamma_{23})(t)-F_3(t)|.
\end{eqnarray*}
Similarly,
\begin{eqnarray*}
  &&  \sup_{t\in\TI} |(\sigma_1\circ\gamma_{12}\circ \gamma_{23} )(t)-\sigma_3(t)|
\\
  &\le&
        \sup_{\tau\in\TI} |(\sigma_1\circ\gamma_{12})(\tau)- \sigma_2(\tau)| +
         \sup_{t\in\TI} |(\sigma_2\circ \gamma_{23})(t)-\sigma_3(t)|.
\end{eqnarray*}

Adding the two inequalities and subsequently optimizing over $\gamma_{12}, \gamma_{23}$
yields the triangle inequality \eqref{triangular}.
\bigskip

\begin{rem}
{\rm The space $\CCC$ is a closed subset of $(\DDD,\rho^+)$. The proof is almost the same as the one,
 recalled above, for the classical case $\C\subset\D$.
Indeed, if $F_n^{\sigma_n}\in \CCC$, $F^\sigma\in \DDD$, and
$\rho^+(F_n^{\sigma_n},F^\sigma)\to 0$, as $n\to\infty$, then there exists a sequence of functions
$\gamma_n\in \TGa$ such that
\[
   \lim_{n\to\infty} \sup_{t\in \TI} |(F_n\circ\gamma_n)(t)-F(t)|
   \le  \lim_{n\to\infty} \left[ \rho^+(F^\sigma,F_n^{\sigma_n}) + n^{-1}\right] =0.
\]
Therefore, $F$ is a uniform limit of continuous functions $F_n\circ\gamma_n$, hence, $F\in\C$ and
$F^\sigma \in \CCC$.
}
\end{rem}
\bigskip

The natural equivalence generated by the semi-distance $\rho^+$ is as follows:
\[
   F_1^{\sigma_1}\equiv F_2^{\sigma_2} \quad \textrm{iff } \quad \rho^+( F_1^{\sigma_1}, F_2^{\sigma_2})=0.
\]

It is worthwhile to compare this equivalence with the other one, which is more natural and very close
to $\equiv$ but still different from it. Namely, let
$F_1^{\sigma_1}\eqg F_2^{\sigma_2}$ iff there exists a homeomorphism $\gamma\in\TGa$ such that
$F_2=F_1\circ\gamma$ and $\sigma_2=\sigma_1\circ\gamma$.
Within a slightly different context, the equivalence $\eqg$ appeared in \cite[Section 15.7]{Whitt}.

\begin{prop} \label{p:diseq}
Let $F_1^{\sigma_1}, F_2^{\sigma_2} \in \DDD$. If $F_1^{\sigma_1}\eqg F_2^{\sigma_2}$,
then $F_1^{\sigma_1}\equiv F_2^{\sigma_2}$.
Conversely, if $F_1^{\sigma_1}\equiv F_2^{\sigma_2}$ and
$\sigma_1,\sigma_2$ are strictly increasing, then $F_1^{\sigma_1}\eqg F_2^{\sigma_2}$.
\end{prop}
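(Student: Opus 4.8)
The plan is to prove the two implications separately.

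For the first implication, suppose $F_1^{\sigma_1}\eqg F_2^{\sigma_2}$, so there is $\gamma_0\in\TGa$ with $F_2=F_1\circ\gamma_0$ and $\sigma_2=\sigma_1\circ\gamma_0$. Then taking $\gamma=\gamma_0$ in the infimum defining $\rho^+$ makes both suprema vanish identically, so $\rho^+(F_1^{\sigma_1},F_2^{\sigma_2})=0$, i.e.\ $F_1^{\sigma_1}\equiv F_2^{\sigma_2}$. This part is immediate.

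The converse is the substantive direction. Assume $\rho^+(F_1^{\sigma_1},F_2^{\sigma_2})=0$ with $\sigma_1,\sigma_2$ strictly increasing (hence homeomorphisms of $\TI$ onto $I$). By definition there is a sequence $\gamma_n\in\TGa$ with $\sup_t|(F_1\circ\gamma_n)(t)-F_2(t)|\to 0$ and $\sup_t|(\sigma_1\circ\gamma_n)(t)-\sigma_2(t)|\to 0$. The key observation is that the second convergence, combined with invertibility of $\sigma_1$ and $\sigma_2$, pins down the $\gamma_n$ themselves: writing $\gamma_n=\sigma_1^{-1}\circ(\sigma_1\circ\gamma_n)$ and using uniform continuity of $\sigma_1^{-1}$ on the compact $I$, one gets $\sup_t|\gamma_n(t)-\sigma_1^{-1}(\sigma_2(t))|\to 0$. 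Thus $\gamma_n$ converges uniformly to the fixed map $\gamma_0:=\sigma_1^{-1}\circ\sigma_2$, which is a composition of increasing homeomorphisms of $\TI$ onto $I$ and of $I$ onto $\TI$, hence $\gamma_0\in\TGa$. Now passing to the limit: $\sigma_1\circ\gamma_0=\sigma_1\circ\sigma_1^{-1}\circ\sigma_2=\sigma_2$ holds exactly. For the $F$ part, $F_1\circ\gamma_n\to F_1\circ\gamma_0$ pointwise at every continuity point of $F_1$ (by continuity of $F_1$ there and $\gamma_n(t)\to\gamma_0(t)$), while $F_1\circ\gamma_n\to F_2$ uniformly; since a c\`adl\`ag function is determined by its values at continuity points (which are dense), we conclude $F_2=F_1\circ\gamma_0$ as elements of $\D[\TI]$. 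Hence $F_1^{\sigma_1}\eqg F_2^{\sigma_2}$.

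The main obstacle is the last step — upgrading the pointwise-a.e.\ identity $F_2=F_1\circ\gamma_0$ to equality as c\`adl\`ag functions, since one must argue carefully that the discontinuity points of $F_1\circ\gamma_0$ match those of $F_2$ and that right-continuity forces agreement everywhere. A clean way is: at a continuity point $t$ of $F_1$, both $F_1\circ\gamma_n(t)\to F_1\circ\gamma_0(t)$ and $F_1\circ\gamma_n(t)\to F_2(t)$, so $F_2(t)=F_1(\gamma_0(t))$ there; the set of such $t$ is dense in $\TI$ (the complement is countable), and since $\gamma_0$ is a homeomorphism $F_1\circ\gamma_0\in\D[\TI]$; two c\`adl\`ag functions agreeing on a dense set are equal, because each is the right limit of its own values and the right limits must coincide — care is needed only at points $t$ where $\gamma_0(t)$ is a discontinuity of $F_1$, but right-continuity of $F_1$ and monotonicity of $\gamma_0$ handle this. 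One should also note that even when $\sigma_1,\sigma_2$ are merely strictly increasing but not necessarily surjective the same argument applies verbatim since $\Sigma$ already requires mappings \emph{onto} $I$, so $\sigma_1^{-1}$ is genuinely defined on all of $I$.
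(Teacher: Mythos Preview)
Your proof is correct and follows essentially the same route as the paper: identify the candidate homeomorphism $\gamma_0=\sigma_1^{-1}\circ\sigma_2$, use the convergence $\sigma_1\circ\gamma_n\to\sigma_2$ to get $\gamma_n\to\gamma_0$, and then conclude $F_2=F_1\circ\gamma_0$ by matching two c\`adl\`ag functions on the co-countable (hence dense) set of continuity points. One small phrasing slip: when you write ``at every continuity point of $F_1$'' you mean at every $t$ such that $\gamma_0(t)$ is a continuity point of $F_1$ --- you clearly understand this, since you say so explicitly in the last paragraph, but the earlier sentence as written is not quite right.
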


\begin{proof} The first claim follows from the definitions of the semi-distance $\rho^+$ and the equivalences.
For the second claim, let us choose a sequence $\{\gamma_n\}_{n \in \N}$ in $ \TGa$ such that
\[
\lim_{n \to \infty}   \left[
        \sup_{t\in\TI} |(F_1\circ\gamma_n)(t)-F_2(t)|
      + \sup_{t\in\TI} |(\sigma_1\circ\gamma_n)(t)-\sigma_2(t)|
  \right] = 0.
\]
Then for an arbitrary $t'\in\TI$ it is true that $(\sigma_1\circ\gamma_n)(t')\to \sigma_2(t')$. Hence
$(F_1\circ\gamma_n)(t')\to (F_1 \circ \sigma_1^{-1} \circ \sigma_2)(t')$ provided that the function
$F_1 \circ \sigma_1^{-1}$ is continuous at $t'$. The latter assumption may cease to hold only for finitely
or countably many $t' \in \TI \setminus \{0,1\}$ since $F_1$ is a c\`adl\`ag function and $\sigma_1^{-1}$
is a continuous one.
On the other hand, we know that $(F_1\circ\gamma_n)(t')\to F_2(t')$.
Thus, $F_2(t')=F_1 \circ \sigma_1^{-1} \circ \sigma_2(t')$, and it follows that
$F_2=F_1 \circ \sigma_1^{-1} \circ \sigma_2$ since the functions on  both sides of this equality are c\`adl\`ag.
Since $\sigma_1^{-1} \circ \sigma_2 \in \TGa$, we have $ F_1^{\sigma_1} \eqg F_2^{\sigma_2}$, as required.
\end{proof}

\begin{cor} \label{c:diseq}If  $ F_1^{\sigma_1} \eqg F_2^{\sigma_2}$, then for every  $F_3^{\sigma_3}\in \DDD$ we have
\be   \label{eqn:distequiv}
   \rho^+\left(F_1^{\sigma_1}, F_3^{\sigma_3}\right)  = \rho^+\left(F_2^{\sigma_1}, F_3^{\sigma_3} \right).
\ee
\end{cor}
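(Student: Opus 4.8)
The plan is to deduce the identity \eqref{eqn:distequiv} from the triangle inequality \eqref{triangular} for $\rho^+$ together with the first assertion of Proposition~\ref{p:diseq}. (I read the right-hand side of \eqref{eqn:distequiv} as $\rho^+(F_2^{\sigma_2},F_3^{\sigma_3})$, the subscript $\sigma_1$ being an evident typo.) The guiding idea is the elementary principle that two points lying at semi-distance zero are interchangeable in every semi-distance computation; the only non-automatic input is that $\eqg$ entails $\equiv$, which is exactly Proposition~\ref{p:diseq}.

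First I would invoke Proposition~\ref{p:diseq}: since $F_1^{\sigma_1}\eqg F_2^{\sigma_2}$, we obtain $F_1^{\sigma_1}\equiv F_2^{\sigma_2}$, i.e.\ $\rho^+(F_1^{\sigma_1},F_2^{\sigma_2})=0$ by the very definition of the equivalence $\equiv$.

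Then, fixing an arbitrary $F_3^{\sigma_3}\in\DDD$, the triangle inequality \eqref{triangular} gives
\[
   \rho^+(F_1^{\sigma_1},F_3^{\sigma_3})\le \rho^+(F_1^{\sigma_1},F_2^{\sigma_2})+\rho^+(F_2^{\sigma_2},F_3^{\sigma_3})=\rho^+(F_2^{\sigma_2},F_3^{\sigma_3}),
\]
and, using the symmetry of $\rho^+$ and again $\rho^+(F_1^{\sigma_1},F_2^{\sigma_2})=0$,
\[
   \rho^+(F_2^{\sigma_2},F_3^{\sigma_3})\le \rho^+(F_2^{\sigma_2},F_1^{\sigma_1})+\rho^+(F_1^{\sigma_1},F_3^{\sigma_3})=\rho^+(F_1^{\sigma_1},F_3^{\sigma_3}).
\]
Combining the two displayed inequalities yields \eqref{eqn:distequiv}.

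I do not expect any genuine obstacle here: all the real content is carried by Proposition~\ref{p:diseq}, and the remaining argument is a two-line application of symmetry and the triangle inequality. The reason for recording the corollary is that it shows $\rho^+$ is constant on $\eqg$-equivalence classes in each argument, hence descends to a well-defined function (and, together with the semi-distance properties already verified, to a genuine distance) on the appropriate quotient of $\DDD$; this is what will be used when identifying the completion in the sections that follow.
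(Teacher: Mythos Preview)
Your argument is correct and matches the paper's own justification, which simply says the claim ``immediately follows from Proposition~\ref{p:diseq} and triangle inequality.'' You have spelled out exactly that two-line derivation, and your reading of $\sigma_1$ on the right-hand side as a typo for $\sigma_2$ is correct.
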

This claim immediately follows from Proposition \ref{p:diseq} and triangle inequality.
\bigskip

Somewhat surprisingly, the full converse in Proposition \ref{p:diseq} is not true, i.e.\ there exist
$ F_1^{\sigma_1}, F_2^{\sigma_2}$ such that $F_1^{\sigma_1} \equiv F_2^{\sigma_2}$
but $ F_1^{\sigma_1}  \eqg F_2^{\sigma_2}$ does not hold. Here is a counter-example.

\begin{example}
{\rm
Let $F_{\bf 1}(\cdot):=1$ and recall that $\varsigma\in\Sigma$ is defined as $\varsigma(t):=t$.
Then for every strictly increasing $\sigma\in\Sigma$ it is true
that $(F_{\bf 1})^\sigma \eqg (F_{\bf 1})^{\varsigma}$.
However, let us consider some $\sigma$ that is not strictly increasing and approximate it
uniformly with strictly increasing $\sigma_\delta\in \Sigma$, $0<\delta<1$,  so that
\[
    \sup_{t\in\TI} |\sigma_\delta(t)-\sigma(t)|\le \delta.
\]
This can be done by letting
\be \label{eqn:sigmadel}
  \sigma_\delta(t):= (1-\delta) \sigma(t) +\delta t, \qquad  t\in\TI.
\ee

We have
$\rho^+\left((F_{\bf 1})^{\sigma_\delta}, (F_{\bf 1})^{\sigma} \right)\le \delta$, which can be seen if we
let $\gamma\in \TGa$ be the identical homeomorphism of $\TI$ in the definition of $\rho^+$.
Furthermore,
\[
  \rho^+\left((F_{\bf 1})^{\varsigma}, (F_{\bf 1})^{\sigma} \right)
  \le  \rho^+\left((F_{\bf 1})^{\varsigma}, (F_{\bf 1})^{\sigma_\delta} \right)
  +  \rho^+\left((F_{\bf 1})^{\sigma_\delta}, (F_{\bf 1})^{\sigma} \right)
  \le 0 + \delta =\delta.
\]
Hence $\rho^+\left((F_{\bf 1})^{\varsigma}, (F_{\bf 1})^{\sigma} \right)=0$ by taking $\delta \searrow  0$.
In other words, $(F_{\bf 1})^{\varsigma} \equiv (F_{\bf 1})^{\sigma}$.

On the other hand, $(F_{\bf 1})^{\varsigma} \eqg (F_{\bf 1})^{\sigma}$ does not hold.
Indeed, assuming otherwise would imply that
there is a representation $\sigma=\varsigma\circ\gamma$ with some
$\gamma\in\TGa$. The right-hand side of this equality is strictly increasing,
while the left-hand side is not, which is a contradiction.
}
\end{example}
\medskip

The next proposition shows that both equivalences $\equiv$ and $\eqg$ respect the visualization
mapping.

\begin{prop} \label{p:eqviz}
Let $F_1^{\sigma_1}, F_2^{\sigma_2} \in \DDD$. Assume that
$\sigma_1,\sigma_2$ are strictly increasing and $F_1^{\sigma_1}\equiv F_2^{\sigma_2}$.
Then $\widehat{F_1^{\sigma_1}}=\widehat{F_2^{\sigma_2}}$.
\end{prop}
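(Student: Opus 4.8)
The plan is to reduce the statement to the already-established equivalence $\eqg$ and then manipulate compositions and inverses. Since $\sigma_1$ and $\sigma_2$ are strictly increasing and $F_1^{\sigma_1}\equiv F_2^{\sigma_2}$, the second part of Proposition~\ref{p:diseq} applies verbatim and produces a homeomorphism $\gamma\in\TGa$ with $F_2=F_1\circ\gamma$ and $\sigma_2=\sigma_1\circ\gamma$. This is the only nontrivial input; checking that its hypotheses hold is immediate from the assumptions, so no extra work is needed.

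From $\sigma_2=\sigma_1\circ\gamma$, and using that $\sigma_1,\sigma_2$ are strictly increasing (hence invertible on $I$) and $\gamma$ is invertible on $\TI$, one gets $\sigma_2^{-1}=\gamma^{-1}\circ\sigma_1^{-1}$ as maps from $I$ onto $\TI$. Substituting this into the definition \eqref{eqn:visual} of the visualization, for every $s\in I$ one computes
\[
   \widehat{F_2^{\sigma_2}}(s)=F_2\bigl(\sigma_2^{-1}(s)\bigr)=(F_1\circ\gamma)\bigl(\gamma^{-1}(\sigma_1^{-1}(s))\bigr)=F_1\bigl(\sigma_1^{-1}(s)\bigr)=\widehat{F_1^{\sigma_1}}(s),
\]
which is exactly the claimed identity $\widehat{F_1^{\sigma_1}}=\widehat{F_2^{\sigma_2}}$.

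There is essentially no obstacle here: the whole content is packaged in Proposition~\ref{p:diseq}, and what remains is the routine bookkeeping with superpositions above. One could alternatively avoid citing Proposition~\ref{p:diseq} and argue directly from $\rho^+(F_1^{\sigma_1},F_2^{\sigma_2})=0$ by choosing $\gamma_n\in\TGa$ along which both suprema in the definition of $\rho^+$ vanish, then performing the change of variables $s=\sigma_2(t)$ and passing to the pointwise limit (using the c\`adl\`ag property of $F_1$ and continuity of $\sigma_1^{-1}$ at all but countably many points); but this merely re-proves Proposition~\ref{p:diseq}, so routing the argument through that proposition is cleaner and shorter.
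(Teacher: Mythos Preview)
Your proof is correct and is essentially identical to the paper's own argument: both invoke Proposition~\ref{p:diseq} to obtain $\gamma\in\TGa$ with $F_2=F_1\circ\gamma$ and $\sigma_2=\sigma_1\circ\gamma$, and then compute $\widehat{F_2^{\sigma_2}}=F_2\circ\sigma_2^{-1}=(F_1\circ\gamma)\circ(\gamma^{-1}\circ\sigma_1^{-1})=F_1\circ\sigma_1^{-1}=\widehat{F_1^{\sigma_1}}$.
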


\begin{proof}
By Proposition \ref{p:diseq} we have $F_1^{\sigma_1}\eqg F_2^{\sigma_2}$, i.e.\ there exists a
$\gamma\in\TGa$ such that $F_2=F_1\circ\gamma$ and $\sigma_2=\sigma_1\circ\gamma$.
Then by visualization's definition we have
\[
   \widehat{F_2^{\sigma_2}}= F_2\circ \sigma_2^{-1}= (F_1\circ\gamma)\circ (\gamma^{-1}\circ\sigma_1^{-1})
   =  F_1\circ \sigma_1^{-1}=\widehat{F_1^{\sigma_1}}.
\]
\end{proof}

\section{Embedding of $\D$ into $\DDD$}

\begin{prop} \label{p:embedding}
The mapping $f\mapsto f^+$ defined by equation $\eqref{fplus}$ provides a dense
isometric embedding of $(\D,\rho)$ into $(\DDD,\rho^+)$. In particular, it isometrically
embeds $(\C,\rho)$ into  $(\CCC,\rho^+)$.
\end{prop}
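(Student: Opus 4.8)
The plan is to establish three things in turn: that $f \mapsto f^+$ is well-defined into $\DDD$ (trivial, since $f \circ \varsigma \in \D[\TI]$ and $\varsigma \in \Sigma$), that it is isometric, and that its image is dense. For the isometry, I would compare the two infima defining $\rho$ and $\rho^+$. Given $f, g \in \D[I]$, recall $f^+ = (f\circ\varsigma)^\varsigma$ and $g^+ = (g\circ\varsigma)^\varsigma$ with $\varsigma$ the identity, so $\rho^+(f^+, g^+) = \inf_{\gamma \in \TGa}\big[\sup_t |f(\gamma(t)) - g(t)| + \sup_t |\gamma(t) - t|\big]$. Since $\TGa$ and $\Ga$ are literally the same set of increasing homeomorphisms of $[0,1]$, and since in the classical distance $\rho(f,g)$ one may equivalently take the infimum over $\gamma^{-1}$ instead of $\gamma$ (a change of variable, using that $\sup_s|s - \gamma(s)| = \sup_t|\gamma(t) - t|$ under $s = \gamma(t)$, and $\sup_s|f(s) - g(\gamma(s))| = \sup_t|f(\gamma(t)) - g(t)|$), the two infima coincide term by term. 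Hence $\rho^+(f^+, g^+) = \rho(f, g)$, and in particular the embedding is injective. The same computation restricted to $\C$ gives the isometric embedding of $(\C,\rho)$ into $(\CCC, \rho^+)$, since $f \in \C \Rightarrow f\circ\varsigma \in \C[\TI]$.

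The substantive part is density: given an arbitrary turbofunction $F^\sigma \in \DDD$ and $\eps > 0$, I must produce $f \in \D[I]$ with $\rho^+(f^+, F^\sigma) < \eps$. The idea is to perturb $\sigma$ to a strictly increasing $\sigma_\eps \in \Sigma$ that is $\eps$-close to $\sigma$ uniformly — exactly the device \eqref{eqn:sigmadel} used in the Example, namely $\sigma_\eps(t) := (1-\eps')\sigma(t) + \eps' t$ for small $\eps'$. Then $\sigma_\eps$ is invertible, so $F^{\sigma_\eps}$ is visualizable: set $f := \widehat{F^{\sigma_\eps}} = F \circ \sigma_\eps^{-1} \in \D[I]$. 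Now I claim $\rho^+(f^+, F^{\sigma_\eps})$ is small. Indeed $f^+ = (f \circ \varsigma)^\varsigma = (F \circ \sigma_\eps^{-1})^\varsigma$, and choosing $\gamma := \sigma_\eps \in \TGa$ in the definition of $\rho^+$ gives $(f \circ \varsigma) \circ \gamma = F \circ \sigma_\eps^{-1} \circ \sigma_\eps = F$ and $\varsigma \circ \gamma = \sigma_\eps$, so both suprema vanish and $\rho^+(f^+, F^{\sigma_\eps}) = 0$. It remains to bound $\rho^+(F^{\sigma_\eps}, F^\sigma)$: taking $\gamma$ to be the identity homeomorphism in the definition yields $\rho^+(F^{\sigma_\eps}, F^\sigma) \le \sup_t |F(t) - F(t)| + \sup_t |\sigma_\eps(t) - \sigma(t)| = \sup_t|\sigma_\eps(t) - \sigma(t)| \le \eps'$. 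Choosing $\eps' < \eps$ and applying the triangle inequality \eqref{triangular} gives $\rho^+(f^+, F^\sigma) \le 0 + \eps' < \eps$, as desired.

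I expect the main (mild) obstacle to be purely bookkeeping: verifying carefully that $F \circ \sigma_\eps^{-1}$ is genuinely c\`adl\`ag on $I$ — right-continuity is inherited from $F$ via the increasing homeomorphism $\sigma_\eps^{-1}$, and existence of left limits likewise — and, for the continuous case, that if $F \in \C[\TI]$ then $f = F \circ \sigma_\eps^{-1} \in \C$, so that the density argument specializes to $(\C, \rho) \hookrightarrow (\CCC, \rho^+)$ without change. One should also confirm the harmless point that $\sigma_\eps$ as defined does map $\TI$ onto $I$ (it sends $0 \mapsto 0$, $1 \mapsto 1$, is continuous and strictly increasing), so that $\sigma_\eps \in \Sigma \cap \TGa$. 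None of this requires real work; the essential content is the two-step approximation (smooth out $\sigma$, then visualize) combined with the term-by-term identification of the two Skorokhod infima.
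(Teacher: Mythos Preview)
Your proposal is correct and follows essentially the same route as the paper: the isometry is the term-by-term identification of the two infima (the paper writes it via $\gamma'=\varsigma\circ\gamma\circ\varsigma^{-1}$), and the density is obtained by perturbing $\sigma$ to the strictly increasing $\sigma_\eps$ of \eqref{eqn:sigmadel}, visualizing, and using the triangle inequality. The only cosmetic difference is that the paper phrases the step $\rho^+(f^+,F^{\sigma_\eps})=0$ as the $\eqg$-equivalence $[\widehat{F^{\sigma_\eps}}]^+\eqg F^{\sigma_\eps}$ and then invokes Corollary~\ref{c:diseq}, whereas you compute it directly by plugging $\gamma=\sigma_\eps$ into the definition of $\rho^+$; these are the same observation.
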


\begin{proof} Let $f_1,f_2\in\D$. Then
\begin{eqnarray*}
 \rho^+(f_1^+,f_2^+)
  &=& \inf_{\gamma\in\TGa}\left[
  \sup_{t\in\TI}|(f_1\circ\varsigma\circ\gamma)(t)- (f_2\circ\varsigma)(t)|
  +   \sup_{t\in\TI}|(\varsigma\circ\gamma)(t)- \varsigma(t)|
  \right]
\\
 &=& \inf_{\gamma'\in\Ga}\left[
  \sup_{s\in I}|(f_1\circ \gamma')(s)- f_2(s)|
  +   \sup_{s\in I}|\gamma'(s)- s|
  \right]
\\
  &=&  \rho(f_1,f_2),
\end{eqnarray*}
with $\gamma':= \varsigma\circ\gamma\circ\varsigma^{-1}$. This proves the isometry property.

Now we prove that the image $\{f^+:f\in\D\}$ is dense in $\DDD$.
To see this, let us consider some arbitrary $F^\sigma\in\DDD$ with
strictly increasing $\sigma$. Then we have
\begin{eqnarray} \nonumber
  \left[\widehat{F^\sigma}\right]^+ &=& \left[ F\circ\sigma^{-1}\right]^+
  = (F\circ\sigma^{-1}\circ\varsigma)^\varsigma
\\ \label{eqn:hatplus}
  &=&  (F\circ\sigma^{-1}\circ\varsigma)^{\sigma\circ(\sigma^{-1}\circ \varsigma)}
  \eqg F^\sigma.
\end{eqnarray}
We see that $F^\sigma$ is $\eqg$-equivalent to an element of the image.

Finally, let $F^\sigma$ be an arbitrary element of $\DDD$.
Consider its approximations $F^{\sigma_\delta}$ with strictly increasing $\sigma_\delta$
introduced in \eqref{eqn:sigmadel}.
Then we have
\[
   \rho^+(F^{\sigma},F^{\sigma_\delta}) \le \sup_{t\in\TI} |\sigma(t)-\sigma_\delta(t)|
  \le \delta \to 0,\qquad \textrm{as } \delta\to 0.
\]
By Corollary \ref{c:diseq} we have
\[
  \limsup_{\delta\to 0} \rho^+\big(F^{\sigma},  \big[\widehat{F^{\sigma_\delta}}\big]^+\big)
  =
   \limsup_{\delta\to 0} \rho^+(F^{\sigma}, F^{\sigma_\delta}) =0,
\]
which proves that $\{f^+: f\in\D\}$ is dense in $(\DDD,\rho^+)$.
\end{proof}
\bigskip

In Example \ref{example:triang}, we constructed a family of functions $\{g_\theta\}_{\theta>2}$
in $\D$ with the Cauchy property but not having a limit in $(\D,\rho)$, as $\theta\to\infty$.
We show now that the isometrically embedded family $\{g_\theta^+\}_{\theta>2}$
{\it has a limit} in $(\DDD,\rho^+)$.
This is a good hint at completeness
of $(\DDD,\rho^+)$.

\begin{example} \label{example:triang_continued}
{\rm  (Example \ref{example:triang} continued).

We use the notation from the mentioned example. Let
\[
   F(t):= g_4(t)=
   \begin{cases}
     0, & \mbox{if } 0\le t \le \frac 14, \\
     1-4|t-\frac 12|, & \mbox{if }  \frac 14\le t \le \frac 34, \\
     0, & \mbox{if }   \frac 34\le t \le 1.
   \end{cases}
\]
Introduce the time changes $\sigma_\theta\in\Sigma$ as piecewise
linear functions with nodes
\[
   \sigma_\theta(0):= 0; \quad \sigma_\theta(1/4):= \frac 12-\frac1\theta;
   \quad
   \sigma_\theta(3/4):= \frac 12+\frac1\theta; \quad \sigma_\theta(1):=1.
\]
Then we have
\[
    g_\theta(s)= F(\sigma_\theta^{-1})(s)= \widehat{F^{\sigma_\theta}}(s),
    \qquad s\in I,
\]
and the sequence $g_\theta^+ = [\widehat{F^{\sigma_\theta}}]^+ \eqg F^{\sigma_\theta}$
(where we used \eqref{eqn:hatplus} at the last step) has
a limit $F^\sigma$ in $\CCC$ (as $\theta\to\infty$)
with $\sigma\in\Sigma$ being the piecewise linear function with nodes
\[
    \sigma(0) := 0; \quad  \sigma(1/4):= \frac 12;
     \quad
    \sigma(3/4):= \frac 12; \quad \sigma(1):=1,
\]
because
\[
   \rho^+ \left(g_\theta^+, F^\sigma\right)
  = \rho^+ \left( F^{\sigma_\theta}, F^\sigma\right)
  \le \sup_{t\in \TI} |\sigma_\theta(t)-\sigma(t)| = \frac 1\theta.
\]
Since the limiting variable change $\sigma$ is not strictly increasing,
the limiting  turbofunction $F^\sigma$ has an instanton and can not
be interpreted as a usual function.
}
\end{example}

\section{Completeness of $(\DDD,\rho^+)$}
In the following proposition we essentially reach our goal by showing that $(\DDD,\rho^+)$
is complete.
\begin{prop} \label{p:complete}
Let $\{F_n^{\sigma_n}\}_{n \in \N}$ be a Cauchy sequence in $(\DDD,\rho^+)$. Then there exists
a limit $F^\sigma\in\DDD$ such that $\lim_{n \to \infty} \rho^+\big(F_n^{\sigma_n},F^\sigma\big)=0$.
Moreover, if $F_n^{\sigma_n}\in\CCC$ for all $n\in\N$, then $F^\sigma\in\CCC$.
\end{prop}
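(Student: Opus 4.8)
The plan is to pass to a rapidly Cauchy subsequence, to replace each turbofunction by a $\eqg$-equivalent one whose two components are \emph{uniformly} Cauchy, and then to combine completeness of $\D[\TI]$ and of $\Sigma$ in the supremum norm with Corollary \ref{c:diseq}. Since a Cauchy sequence converges as soon as one of its subsequences does, and this argument uses only the triangle inequality (hence is valid for the semi-distance $\rho^+$), we may assume, after passing to a subsequence still denoted $F_n^{\sigma_n}$, that $\rho^+\big(F_n^{\sigma_n},F_{n+1}^{\sigma_{n+1}}\big)<2^{-n}$ for all $n$. By the definition of $\rho^+$ we may then pick $\gamma_n\in\TGa$ with
\[
   \sup_{t\in\TI}\big|(F_n\circ\gamma_n)(t)-F_{n+1}(t)\big|+\sup_{t\in\TI}\big|(\sigma_n\circ\gamma_n)(t)-\sigma_{n+1}(t)\big|<2^{-n};
\]
the substitution $t=\gamma_n^{-1}(\tau)$ turns this into $\|F_n-F_{n+1}\circ\gamma_n^{-1}\|+\|\sigma_n-\sigma_{n+1}\circ\gamma_n^{-1}\|<2^{-n}$, where $\|\cdot\|$ denotes the supremum norm on $\TI$.

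The key step is to set $M_0:=\mathrm{id}_{\TI}$ and $M_n:=\gamma_n^{-1}\circ\cdots\circ\gamma_1^{-1}\in\TGa$, and to define $\tilde F_n:=F_n\circ M_{n-1}$ and $\tilde\sigma_n:=\sigma_n\circ M_{n-1}$. Then $\tilde F_n^{\,\tilde\sigma_n}\eqg F_n^{\sigma_n}$ (take $\gamma=M_{n-1}$ in the definition of $\eqg$), so by Corollary \ref{c:diseq} it suffices to produce $F^\sigma\in\DDD$ with $\rho^+\big(\tilde F_n^{\,\tilde\sigma_n},F^\sigma\big)\to0$. The advantage of this choice is that $M_n=\gamma_n^{-1}\circ M_{n-1}$ and that composing on the right with the bijection $M_{n-1}$ does not change supremum distances, so that
\[
   \|\tilde F_{n+1}-\tilde F_n\|=\|F_{n+1}\circ\gamma_n^{-1}-F_n\|<2^{-n},\qquad\|\tilde\sigma_{n+1}-\tilde\sigma_n\|<2^{-n}.
\]
Hence $\{\tilde F_n\}$ is Cauchy in $(\D[\TI],\|\cdot\|)$ and $\{\tilde\sigma_n\}$ is Cauchy in $(\Sigma,\|\cdot\|)$.

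I would then use that both of these spaces are complete. A uniform limit of c\`adl\`ag functions is c\`adl\`ag: right-continuity passes to uniform limits, and at each point the left limits of the approximants converge (their differences are bounded by the corresponding supremum distances) and identify the left limit of the limit function. A uniform limit of continuous non-decreasing surjections $\TI\to I$ is continuous, non-decreasing, and still sends $0$ to $0$ and $1$ to $1$, hence is again onto $I$. So $F:=\lim_n\tilde F_n\in\D[\TI]$ and $\sigma:=\lim_n\tilde\sigma_n\in\Sigma$ exist and $F^\sigma\in\DDD$. Taking $\gamma=\mathrm{id}_{\TI}$ in the definition of $\rho^+$ gives $\rho^+\big(\tilde F_n^{\,\tilde\sigma_n},F^\sigma\big)\le\|\tilde F_n-F\|+\|\tilde\sigma_n-\sigma\|\to0$, and by Corollary \ref{c:diseq} this yields $\rho^+\big(F_n^{\sigma_n},F^\sigma\big)\to0$ along the subsequence, hence for the whole original sequence. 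Finally, if every $F_n\in\C[\TI]$ then so is each $\tilde F_n$, hence so is the uniform limit $F$, and therefore $F^\sigma\in\CCC$ (this also follows from the remark that $\CCC$ is closed in $(\DDD,\rho^+)$).

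The only point I expect to need genuine care is the completeness of $(\D[\TI],\|\cdot\|)$ and $(\Sigma,\|\cdot\|)$ — in particular the fact that a uniform limit of c\`adl\`ag functions is again c\`adl\`ag, and that surjectivity onto $I$ survives passage to uniform limits. Everything else is routine bookkeeping: constructing the reparametrizations $M_{n-1}$ so that the $\eqg$-representatives have uniformly Cauchy components, and transporting the final conclusion back to the $F_n^{\sigma_n}$ through Corollary \ref{c:diseq}.
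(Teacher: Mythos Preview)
Your proof is correct and follows essentially the same approach as the paper: pass to a rapidly Cauchy subsequence, compose with accumulated homeomorphisms to make the components uniformly Cauchy, and invoke completeness of $\D[\TI]$ (and $\C[\TI]$) in the sup norm together with closedness of $\Sigma$ under uniform limits. The only cosmetic difference is that the paper bounds $\rho^+(F_{n_k}^{\sigma_{n_k}},F^\sigma)$ directly by plugging $\gamma=\lambda_k$ into the definition of $\rho^+$, whereas you route the last step through the $\eqg$-equivalent representatives and Corollary~\ref{c:diseq}; both are equivalent.
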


\begin{proof}
It suffices to show that $\{F_n^{\sigma_n}\}_{n \in \N}$ contains a converging subsequence.
Choose a subsequence $\{n_k\}_{k \in \N}$ in  $\N$ such that
$\rho^+(F_{n_k}^{\sigma_{n_k}},F_{n_{k+1}}^{\sigma_{n_{k+1}}} ) <1/2^k$.
By definition of the semi-metric $\rho^+$, for every $k \in \N$ there exists a $\gamma_k \in \TGa$
such that
\[
      \sup_{t\in\TI} |(F_{n_{k+1}}\circ\gamma_k)(t)-F_{n_k}(t)|
    + \sup_{t\in\TI} |(\sigma_{n_{k+1}}\circ\gamma_k)(t)-\sigma_{n_k}(t)| < \frac{1}{2^k}.
\]
Define $\lambda_k:= \gamma_{k-1} \circ \ldots \circ \gamma_1$ for integer $k \ge 2$ and let $\lambda_1$
be the identical homeomorphism of $\TI$. Since $\lambda_k \in \TGa$, the change of variables $t = \lambda_k(\tau)$
yields
\begin{multline*}
       \sup_{\tau\in\TI} |(F_{n_{k+1}}\circ\lambda_{k+1})(\tau)-(F_{n_k} \circ \lambda_k) (\tau)| \\
    + \sup_{\tau\in\TI} |(\sigma_{n_{k+1}}\circ\lambda_{k+1})(\tau)-(\sigma_{n_k} \circ \lambda_k)(\tau)|
    < \frac{1}{2^k}.
\end{multline*}

Hence the sequences $\{F_{n_k}\circ\lambda_k\}_{k \in \N}$ and $\{\sigma_{n_k}\circ\lambda_k\}_{k \in \N}$
are Cauchy sequences in $\D[\TI]$ (resp.\ $\C[\TI]$)  equipped with the uniform distance. Since these metric spaces
are complete (see \cite[Section 18]{Bil}), we can find $F\in \D[\TI]$ and $\sigma \in \C[\TI]$ such that
\[
    \lim_{k \to \infty} \left[ \sup_{\tau\in\TI} |(F_{n_k}\circ\lambda_k)(\tau)-F(\tau)|
    + \sup_{\tau\in\TI} |(\sigma_{n_k}\circ\lambda_k)(\tau)-\sigma(\tau)| \right ] = 0.
\]
We also have $\sigma \in \Sigma$, since $\sigma$ is non-decreasing as a pointwise
limit of non-decreasing functions $\sigma_{n_k}\circ\lambda_k$.
Finally, the above equality implies that
$\rho^+(F_{n_k}^{\sigma_{n_k}}, F^\sigma)\to 0$, as $k \to \infty$, as required.

Under the additional assumption $\{ F_n^{\sigma_n} \}_{n \in \N} \subset \CCC$, one may observe that
$\{F_{n_k} \circ \lambda_k\}_{k \in \N}\subset \C[\TI]$,
thus $F\in \C[\TI]$ and $F^\sigma\in\CCC$.
\end{proof}

\section{Concluding formalities}

Consider the quotient space $\D_\equiv^+:= \DDD / \! \equiv$.  Let
$\pi:\DDD\mapsto \D_\equiv^+$  denote the natural projection. The subspace $\CCC$
is factorized correctly because from $F_1^{\sigma_1}\equiv F_2^{\sigma_2}$
and $F_1^{\sigma_1}\in \CCC$ it follows that $F_2^{\sigma_2}\in \CCC$.
We denote  $\C_\equiv^+:= \CCC / \equiv$, so $\C_\equiv^+ =\pi(\CCC)$.

The {\it semi-distance} $\rho^+$ on $\DDD$ generates the {\it distance} on $\D_\equiv^+$.
We will also denote this distance by $\rho^+$, which should not cause
any confusion.

\begin{thm}
The spaces  $(\D_\equiv^+,\rho^+)$ and $(\C_\equiv^+,\rho^+)$ are isometrically isomorphic to the
completions of the respective spaces  $(\D,\rho)$ and $(\C,\rho)$.
\end{thm}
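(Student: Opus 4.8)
The plan is to assemble the theorem from the pieces already established in the excerpt, since all the hard analytic work is done. Recall that a completion of a metric space $(X,d)$ is characterised (up to isometric isomorphism) as any complete metric space $(Y,d_Y)$ together with an isometric embedding $\iota\colon X\to Y$ whose image is dense in $Y$. So the proof reduces to verifying these three ingredients for $(\D_\equiv^+,\rho^+)$ relative to $(\D,\rho)$, and likewise for the continuous versions.

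First I would pass from the semi-metric space $(\DDD,\rho^+)$ to the genuine metric space $(\D_\equiv^+,\rho^+)$: the quotient by the equivalence $\equiv$ turns the semi-distance into an honest distance (this is the standard Kolmogorov-type quotient, and it is noted just before the theorem statement). Next, composing the embedding $f\mapsto f^+$ from Proposition~\ref{p:embedding} with the projection $\pi$, I get a map $f\mapsto \pi(f^+)$ from $\D$ into $\D_\equiv^+$. Proposition~\ref{p:embedding} says this is an isometry ($\rho^+(f_1^+,f_2^+)=\rho(f_1,f_2)$, and the quotient does not change $\rho^+$-values) and that its image is dense in $(\DDD,\rho^+)$, hence dense in the quotient as well, since $\pi$ is a $1$-Lipschitz surjection (in fact it preserves distances) and the image of a dense set under such a map is dense. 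Finally, Proposition~\ref{p:complete} gives completeness of $(\DDD,\rho^+)$ as a semi-metric space, which immediately transfers to completeness of the quotient metric space $(\D_\equiv^+,\rho^+)$: a Cauchy sequence of classes lifts to a Cauchy sequence of representatives, which converges in $\DDD$, and the projection of the limit is the limit of the classes.

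With these three facts in hand, the abstract uniqueness of completions finishes the argument: $(\D_\equiv^+,\rho^+)$ is a complete metric space containing an isometric dense copy of $(\D,\rho)$, so it is isometrically isomorphic to the completion of $(\D,\rho)$, with the isomorphism fixing the embedded copy. For the continuous case, $\CCC$ is a closed subset of the complete space $(\DDD,\rho^+)$ (the Remark after the triangle inequality), hence $(\C_\equiv^+,\rho^+)=\pi(\CCC)$ is closed in $(\D_\equiv^+,\rho^+)$ and therefore complete; Proposition~\ref{p:embedding} also records that $f\mapsto f^+$ embeds $(\C,\rho)$ isometrically into $(\CCC,\rho^+)$, and the density of $\{f^+:f\in\C\}$ in $\CCC$ follows from the same approximation scheme used in Proposition~\ref{p:embedding} (the functions $\sigma_\delta$ in \eqref{eqn:sigmadel} keep $F$ continuous, and visualization of a continuous turbofunction is a continuous function). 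So the same abstract argument identifies $(\C_\equiv^+,\rho^+)$ with the completion of $(\C,\rho)$.

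I do not expect a genuine obstacle here — the theorem is really a bookkeeping corollary of Propositions~\ref{p:embedding} and~\ref{p:complete} together with the closedness Remark. The only point requiring a little care is making sure the density claim in Proposition~\ref{p:embedding}, which is stated for $\DDD$, restricts correctly to $\CCC$: one must check that the approximating elements $[\widehat{F^{\sigma_\delta}}]^+$ lie in the image of $\C$ when $F^\sigma\in\CCC$, i.e.\ that $\widehat{F^{\sigma_\delta}}\in\C[I]$ whenever $F\in\C[\TI]$ and $\sigma_\delta$ is a strictly increasing element of $\Sigma$ — which holds because a composition of a continuous function with a homeomorphism is continuous. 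Beyond that, everything is a transfer of the semi-metric statements to the quotient, which is routine.
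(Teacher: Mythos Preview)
Your proposal is correct and follows essentially the same route as the paper: invoke Proposition~\ref{p:complete} for completeness, Proposition~\ref{p:embedding} for the isometric dense embedding $f\mapsto\pi(f^+)$, and then appeal to uniqueness of completions. You are simply more explicit than the paper about the quotient passage and about why the density and completeness survive when restricting to the continuous case.
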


\begin{proof}
From Proposition \ref{p:complete} it follows that $(\D_\equiv^+,\rho^+)$
is a complete metric space.
From Proposition \ref{p:embedding} it follows that the mapping
\[
   f \mapsto \pi\big( f^+ \big)
\]
is an injective isometric embedding of $(\D,\rho)$ into $(\D_\equiv^+,\rho^+)$ and
its image $\left\{\pi\big( f^+ \big): f\in\D\right\}$ is dense in $(\D_\equiv^+,\rho^+)$.

Therefore, we can identify $(\D_\equiv^+,\rho^+)$ with the completion of  $(\D,\rho)$.

By the same arguments, we can identify $(\C_\equiv^+,\rho^+)$ with the completion of
$(\C,\rho)$.
\end{proof}

\begin{rem}
{\rm An equivalence class, i.e.\ an element of $\D_\equiv^+$, may be viewed as an element
of the completion, while the turbofunctions from this class may be viewed as its different
parametrizations. This is quite similar to a curve on a manifold having a multitude of parametrizations.}
\end{rem}

\section{Pointwise convergence}

Finally, we relate convergence in $(\DDD, \rho^+)$ to pointwise convergence.

Recall that every function $\sigma \in \Sigma$ is non-decreasing.
From this point on, let us agree to understand $\sigma^{-1}$  as the {\it right-continuous inverse},
namely
\begin{equation} \label{eqn:inverse}
       \sigma^{-1}(s):=\max \{t \in \TI: \sigma(t) \le s \}, \quad s \in [0,1].
\end{equation}
By doing so, we extend definition \eqref{eqn:visual} of the visualization mapping to the whole $\D$.
Then $ \widehat{F^\sigma} \in \D$ for any $F^\sigma \in \DDD$.

\begin{thm} \label{t:conv}
Suppose that $\lim_{n \to \infty} \rho^+(F_n^{\sigma_n}, F^\sigma)=0$ for some turbofunctions
$F^\sigma$, $F_1^{\sigma_1}, F_2^{\sigma_2}, \ldots$ in $\DDD$. Then
$\lim_{n \to \infty} \widehat{F_n^{\sigma_n}}(s) = \widehat{F^\sigma}(s)$ for $s =1$
and for every continuity point $s$ of $\sigma^{-1}$ such that $F$ is continuous at $\sigma^{-1}(s)$.
\end{thm}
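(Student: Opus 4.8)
The plan is to transfer the given convergence $\rho^+(F_n^{\sigma_n},F^\sigma)\to 0$ from the level of turbofunctions down to the level of visualizations, exploiting the right-continuous inverse in \eqref{eqn:inverse}. For each $n$ pick $\gamma_n\in\TGa$ with
\[
   \eps_n := \sup_{t\in\TI}|(F_n\circ\gamma_n)(t)-F(t)| + \sup_{t\in\TI}|(\sigma_n\circ\gamma_n)(t)-\sigma(t)| \longrightarrow 0,
\]
and set $\widetilde F_n := F_n\circ\gamma_n$, $\widetilde\sigma_n := \sigma_n\circ\gamma_n$. Since $\gamma_n$ is an increasing homeomorphism of $\TI$, one checks directly from \eqref{eqn:inverse} that $\widehat{F_n^{\sigma_n}} = F_n\circ\sigma_n^{-1} = \widetilde F_n\circ\widetilde\sigma_n^{-1} = \widehat{\widetilde F_n^{\widetilde\sigma_n}}$; that is, replacing $(F_n,\sigma_n)$ by its $\gamma_n$-reparametrization changes nothing in the visualization. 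So it suffices to prove the statement assuming $\sup_{t}|F_n(t)-F(t)| + \sup_t|\sigma_n(t)-\sigma(t)|\to 0$, i.e.\ $F_n\to F$ and $\sigma_n\to\sigma$ uniformly on $\TI$.

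Now fix $s$ which is a continuity point of $\sigma^{-1}$ and such that $F$ is continuous at $t_0:=\sigma^{-1}(s)$; the case $s=1$ is handled together, noting $\sigma^{-1}(1)=1$ and right-continuity of $F$ at $1$ should be interpreted as continuity from the left, which holds automatically, so I would treat $s=1$ by the same argument using one-sided continuity. The key analytic step is to show $\sigma_n^{-1}(s)\to t_0$. This is a standard fact: uniform convergence $\sigma_n\to\sigma$ forces the right-continuous inverses to converge at every continuity point of $\sigma^{-1}$. Concretely, for $\delta>0$ small, continuity of $\sigma^{-1}$ at $s$ gives $\sigma(t_0-\delta)<s<\sigma(t_0+\delta)$ whenever $t_0\pm\delta\in(0,1)$ (using that $\sigma^{-1}$ jumps exactly where $\sigma$ is flat, so continuity of $\sigma^{-1}$ at $s$ rules out $\sigma$ being flat at level $s$); for $n$ large, $\|\sigma_n-\sigma\|_\infty$ is small enough that $\sigma_n(t_0-\delta)<s<\sigma_n(t_0+\delta)$, whence $t_0-\delta\le\sigma_n^{-1}(s)\le t_0+\delta$. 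The boundary subcases $t_0=0$ or $t_0=1$ use the one-sided version of the same estimate together with $\sigma_n(0)=\sigma(0)=0$, $\sigma_n(1)=\sigma(1)=1$.

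With $t_0':=\sigma_n^{-1}(s)\to t_0$ in hand, the conclusion follows from continuity of $F$ at $t_0$ and uniform convergence of $F_n$ to $F$:
\[
   |\widehat{F_n^{\sigma_n}}(s) - \widehat{F^\sigma}(s)| = |F_n(t_0') - F(t_0)| \le \|F_n-F\|_\infty + |F(t_0') - F(t_0)| \longrightarrow 0.
\]
I expect the main obstacle to be the convergence $\sigma_n^{-1}(s)\to\sigma^{-1}(s)$, and more specifically the bookkeeping needed to justify the strict inequalities $\sigma(t_0-\delta)<s<\sigma(t_0+\delta)$ from the hypothesis that $\sigma^{-1}$ is continuous at $s$ — one must argue that continuity of the right-continuous inverse at $s$ precludes both a flat stretch of $\sigma$ at level $s$ and the value $s$ being unattained in a one-sided neighbourhood — and then to merge the interior case with the two boundary cases $s=0$ (giving $t_0=0$), $t_0=1$, and $s=1$ cleanly. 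The rest is routine.
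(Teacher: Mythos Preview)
Your approach is essentially the paper's: choose near-optimal $\gamma_n$, reduce to showing $(\gamma_n^{-1}\circ\sigma_n^{-1})(s)\to\sigma^{-1}(s)$ at continuity points of $\sigma^{-1}$, and finish via continuity of $F$ at $\sigma^{-1}(s)$. Your reparametrization-invariance step (absorbing $\gamma_n$ into $(F_n,\sigma_n)$ so that one may assume $F_n\to F$, $\sigma_n\to\sigma$ uniformly) is a clean cosmetic simplification of what the paper writes out with $\gamma_n$ kept explicit.

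One genuine slip: your handling of $s=1$ is wrong as stated. A c\`adl\`ag function $F$ need \emph{not} be left-continuous at $1$ (it has a left limit $F(1-)$, but $F(1-)$ may differ from $F(1)$), so the claim that ``continuity from the left \ldots\ holds automatically'' is false, and folding $s=1$ into the one-sided continuity argument can fail. The correct observation, which the paper makes separately, is that $\sigma_n^{-1}(1)=1=\sigma^{-1}(1)$ \emph{exactly} by the definition \eqref{eqn:inverse} of the right-continuous inverse; hence no limit in $t$ is taken and no continuity of $F$ at $1$ is needed: after your reparametrization, $\widehat{F_n^{\sigma_n}}(1)=F_n(1)\to F(1)=\widehat{F^\sigma}(1)$ follows directly from uniform convergence.
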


We first give two corollaries.

\begin{cor}
Any Cauchy sequence in $(\D,\rho)$ converges pointwise to an element of $\D$ except, possibly,
for at most countable set of points in $[0,1)$.
\end{cor}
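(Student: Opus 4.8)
The plan is to deduce this corollary directly from Theorem~\ref{t:conv} together with the identification of the completion established above. Given a Cauchy sequence $\{f_n\}$ in $(\D,\rho)$, Proposition~\ref{p:embedding} tells us that $\{f_n^+\}$ is Cauchy in $(\DDD,\rho^+)$, and by Proposition~\ref{p:complete} there is a turbofunction $F^\sigma\in\DDD$ with $\rho^+(f_n^+,F^\sigma)\to 0$. The key observation is that the visualization of $f_n^+$ is $f_n$ itself: by definition $f_n^+=(f_n\circ\varsigma)^\varsigma$ and $\varsigma^{-1}=\varsigma$, so $\widehat{f_n^+}=f_n\circ\varsigma\circ\varsigma^{-1}=f_n$. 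Hence Theorem~\ref{t:conv} applies and yields $f_n(s)\to\widehat{F^\sigma}(s)$ for $s=1$ and for every continuity point $s$ of $\sigma^{-1}$ at which $F$ is continuous at $\sigma^{-1}(s)$.

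It then remains to argue that the exceptional set of $s\in[0,1)$ where convergence may fail is at most countable. There are two sources of exceptional points. First, the set of discontinuity points of the monotone function $\sigma^{-1}$ on $[0,1]$ is at most countable, as is classical for monotone functions. Second, among the continuity points $s$ of $\sigma^{-1}$, convergence is only guaranteed when $F$ is continuous at $\sigma^{-1}(s)$; but $F\in\D[\TI]$ is c\`adl\`ag, so it has at most countably many discontinuity points in $\TI$, and the preimage under $\sigma^{-1}$ of this countable set is again countable. Taking the union of these two countable sets gives a countable subset of $[0,1)$ outside of which $f_n(s)\to\widehat{F^\sigma}(s)$; since $\widehat{F^\sigma}\in\D\subset\D[I]$ by the extended visualization convention, the limit is indeed an element of $\D$, proving the corollary.

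I do not anticipate a genuine obstacle here, since the statement is essentially a repackaging of Theorem~\ref{t:conv}. The one point requiring a modicum of care is the verification that $\widehat{f_n^+}=f_n$, i.e.\ that embedding via $f\mapsto f^+$ and then visualizing returns the original function; this is immediate from the definitions once one notes $\varsigma=\mathrm{id}_{\TI}$ and uses the right-continuous inverse convention of \eqref{eqn:inverse}. A secondary subtlety is purely cosmetic: the corollary speaks of the Cauchy sequence "converging pointwise to an element of $\D$" without naming the limit, so one should simply exhibit $\widehat{F^\sigma}$ as that element and note it lies in $\D$ by the remark preceding Theorem~\ref{t:conv}.
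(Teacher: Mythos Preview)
Your proposal is correct and follows essentially the same route as the paper, which simply cites Theorem~\ref{t:conv} together with Propositions~\ref{p:embedding} and~\ref{p:complete} and the countability of the discontinuity sets of $\sigma^{-1}$ and $F$. You have merely spelled out the details, including the identity $\widehat{f_n^+}=f_n$ that the paper leaves implicit; the one step you might add a word to is why the preimage under $\sigma^{-1}$ of a countable set is countable---this holds because $\sigma^{-1}$ is strictly increasing (since $\sigma$ is continuous), hence injective.
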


This follows from Theorem \ref{t:conv} combined with Propositions~\ref{p:embedding} and \ref{p:complete},
and using that
the functions $\sigma^{-1}$ and $F$ have at most countably many discontinuities.

\begin{rem}
{\rm We stress that the limiting element in $\D$ does not characterize completely
the limiting turbofunction $F^\sigma$ because it skips the instantons. In Example \ref{example:triang_continued}
the limiting function is zero but $F^\sigma$  is not degenerate.}
\end{rem}
\medskip

The next corollary asserts that Proposition~\ref{p:eqviz} still holds for the extended notion
of visualization.

\begin{cor}
   Let $F_1^{\sigma_1}, F_2^{\sigma_2} \in \DDD$ and $F_1^{\sigma_1}\equiv F_2^{\sigma_2}$.
   Then $\widehat{F_1^{\sigma_1}}=\widehat{F_2^{\sigma_2}}$.
\end{cor}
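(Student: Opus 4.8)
The plan is to deduce this corollary from Theorem~\ref{t:conv} together with the density statement of Proposition~\ref{p:embedding}. Fix $F_1^{\sigma_1}\equiv F_2^{\sigma_2}$, so that $\rho^+(F_1^{\sigma_1},F_2^{\sigma_2})=0$. First I would use the approximation scheme already developed in the proof of Proposition~\ref{p:embedding}: for each $\delta\in(0,1)$ set $\sigma_{i,\delta}(t):=(1-\delta)\sigma_i(t)+\delta t$, which is strictly increasing, satisfies $\sup_t|\sigma_{i,\delta}(t)-\sigma_i(t)|\le\delta$, and hence $\rho^+(F_i^{\sigma_{i,\delta}},F_i^{\sigma_i})\le\delta$ for $i=1,2$. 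By the triangle inequality, $\rho^+(F_1^{\sigma_{1,\delta}},F_2^{\sigma_{2,\delta}})\le 2\delta$, so along any sequence $\delta_n\to 0$ both $F_1^{\sigma_{1,\delta_n}}$ and $F_2^{\sigma_{2,\delta_n}}$ converge in $\rho^+$ to the common limit $F_1^{\sigma_1}\;(\equiv F_2^{\sigma_2})$.

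Next I would apply Theorem~\ref{t:conv} to each of these two convergent sequences, taking the common limiting turbofunction to be, say, $F_1^{\sigma_1}$. The theorem gives $\widehat{F_i^{\sigma_{i,\delta_n}}}(s)\to\widehat{F_1^{\sigma_1}}(s)$ at $s=1$ and at every continuity point $s$ of $\sigma_1^{-1}$ at which $F_1$ is continuous at $\sigma_1^{-1}(s)$; call this (co-countable) set of good points $S$. On the other hand, since each $\sigma_{i,\delta}$ is strictly increasing, Proposition~\ref{p:eqviz} (applied to the pair $F_i^{\sigma_{i,\delta}}$ and itself, trivially) is not quite what is needed — rather, I should observe directly that for strictly increasing $\sigma_{i,\delta}$ the extended inverse \eqref{eqn:inverse} coincides with the ordinary inverse, so $\widehat{F_i^{\sigma_{i,\delta}}}=F_i\circ\sigma_{i,\delta}^{-1}$ is the classical visualization. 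Thus for every $s\in S$ we get
\[
   \widehat{F_1^{\sigma_1}}(s)=\lim_{n\to\infty}\widehat{F_1^{\sigma_{1,\delta_n}}}(s),
   \qquad
   \widehat{F_1^{\sigma_1}}(s)=\lim_{n\to\infty}\widehat{F_2^{\sigma_{2,\delta_n}}}(s).
\]
The first identity is automatic from Theorem~\ref{t:conv}; for the second I run Theorem~\ref{t:conv} once more, now with the sequence $F_2^{\sigma_{2,\delta_n}}\to F_2^{\sigma_2}$ and limiting turbofunction $F_2^{\sigma_2}$, obtaining $\widehat{F_2^{\sigma_{2,\delta_n}}}(s)\to\widehat{F_2^{\sigma_2}}(s)$ on the analogous co-countable set $S'$ attached to $\sigma_2^{-1}$ and $F_2$. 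Comparing, $\widehat{F_1^{\sigma_1}}(s)=\widehat{F_2^{\sigma_2}}(s)$ for all $s\in S\cap S'$ and for $s=1$.

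Finally I would upgrade this almost-everywhere agreement to genuine equality of the two c\`adl\`ag functions $\widehat{F_1^{\sigma_1}}$ and $\widehat{F_2^{\sigma_2}}$ on all of $[0,1]$: the complement of $S\cap S'$ in $[0,1)$ is at most countable, hence dense-complement, so every point of $[0,1)$ is a right-limit of points of $S\cap S'$; right-continuity of both visualizations then forces equality everywhere on $[0,1)$, and we already have it at $s=1$. The main obstacle is purely bookkeeping: one must make sure the hypotheses of Theorem~\ref{t:conv} are applied with the correct limiting turbofunction in each of the two invocations (so that the ``good'' sets $S$ and $S'$ are governed by $(\sigma_1,F_1)$ and $(\sigma_2,F_2)$ respectively), and one must check that for strictly increasing time changes the extended visualization \eqref{eqn:inverse} reduces to \eqref{eqn:visual}; both are straightforward once spelled out. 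No new idea beyond Theorem~\ref{t:conv} is required.
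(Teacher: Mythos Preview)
Your argument is correct, but it is considerably more elaborate than necessary, and the paper's proof shows why. The entire approximation step by strictly increasing $\sigma_{i,\delta}$ is superfluous: Theorem~\ref{t:conv} applies to \emph{any} sequence converging in $\rho^+$, and the visualization $\widehat{F^\sigma}$ is already defined for arbitrary $\sigma\in\Sigma$ via the right-continuous inverse \eqref{eqn:inverse}. So instead of manufacturing two nontrivial sequences, the paper simply takes the \emph{constant} sequence $F_n^{\sigma_n}:=F_2^{\sigma_2}$ with limit $F^\sigma:=F_1^{\sigma_1}$ (valid since $\rho^+(F_2^{\sigma_2},F_1^{\sigma_1})=0$). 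Theorem~\ref{t:conv} then reads directly as $\widehat{F_2^{\sigma_2}}(s)=\widehat{F_1^{\sigma_1}}(s)$ for $s=1$ and on the single co-countable set $S$ determined by $(\sigma_1,F_1)$; the c\`adl\`ag extension argument finishes the proof. One invocation of the theorem, one exceptional set.

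In your version, note that the first displayed identity (about $\widehat{F_1^{\sigma_{1,\delta_n}}}$) is never actually used: only the sequence $F_2^{\sigma_{2,\delta_n}}$ does any work, via two applications of Theorem~\ref{t:conv} with the two different limits $F_1^{\sigma_1}$ and $F_2^{\sigma_2}$. Your explanation of the second displayed identity is also slightly misphrased---it follows from Theorem~\ref{t:conv} applied with limit $F_1^{\sigma_1}$ (not $F_2^{\sigma_2}$), while the application with limit $F_2^{\sigma_2}$ gives the separate statement on $S'$ that you then compare. None of this is a genuine gap, but it is worth seeing how the constant-sequence trick collapses the whole argument to one line.
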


This follows from Theorem~\ref{t:conv} once we take $F_2^{\sigma_2}=F_3^{\sigma_3}=\ldots$ and
$F^\sigma = F_1^{\sigma_1}$ and use the fact that a c\`adl\`ag function on $[0,1]$
is identified by its values on any dense set that includes $1$.
\medskip

\begin{proof}[{Proof of Theorem~\ref{t:conv}}]
By definition of the semi-metric $\rho^+$, there exist homeomorphisms $\gamma_1, \gamma_2, \ldots$ in $\TGa$
such that
\be \label{eqn:conv12}
    \lim_{n \to \infty} \left[ \sup_{t\in\TI} |(F_n\circ\gamma_n)(t)-F(t)|
       + \sup_{t\in\TI} |(\sigma_n \circ\gamma_n)(t)-\sigma(t)| \right ] = 0.
\ee

We first consider the case $s=1$. Since $\gamma_n(1)=1$ for all $n$, equality \eqref{eqn:conv12} implies $F_n(1)\to F(1)$.
This yields the claim for $s =1$, since $\sigma^{-1}(1)=\sigma_n^{-1}(1)=1$ for all $n \in \N$
by definition \eqref{eqn:inverse} of right-continuous
inverse\footnote{Notice that this argument does not apply to the opposite endpoint  $s=0$ of the time
interval because we do not have $\sigma^{-1}(0)=0$ in general. }
and we obtain
\[
    \lim_{n \to \infty} \widehat{F_n^{\sigma_n}}(1) = \lim_{n \to \infty} F_n\big(\sigma_n^{-1}(1)\big)
    = \lim_{n \to \infty} F_n(1) =F(1) = F\big(\sigma^{-1}(1)\big) = \widehat{F^{\sigma}}(1).
\]

It remains to consider the case when $\sigma^{-1}$
is continuous at $s$ and $F$ is continuous at $\sigma^{-1}(s)$.

We have
\begin{align*}
     \big |\widehat{F_n^{\sigma_n}}(s) - \widehat{F^\sigma}(s) \big |
     &=  \big | (F_n \circ \gamma_n) \circ (\gamma_n^{-1} \circ \sigma_n^{-1} )  (s) - (F \circ \sigma^{-1})(s) \big |
\\
     &\le \big |(F_n \circ \gamma_n) \big ((\gamma_n^{-1} \circ \sigma_n^{-1} )(s) \big)
        - F \big ( (\gamma_n^{-1} \circ \sigma_n^{-1}) (s) \big)  \big |
\\
     &\mathrel{\phantom{=}}  +  \big |  F \big ( (\gamma_n^{-1} \circ \sigma_n^{-1}) (s) \big)
       - F (\sigma^{-1}(s)) \big |,
\end{align*}
hence
\[
      \big |\widehat{F_n^{\sigma_n}}(s) - \widehat{F^\sigma}(s) \big |
      \le \sup_{t\in\TI} |(F_n\circ\gamma_n)(t)-F(t)|
         +  \big | F \big ( (\gamma_n^{-1} \circ \sigma_n^{-1}) (s) \big) - F (\sigma^{-1}(s)) \big | .
\]
By equality \eqref{eqn:conv12}, the first term on the right-hand side vanishes as $n \to \infty$,
and since $F$ is continuous at $\sigma^{-1}(s)$,
it remains to prove that
\[
   \lim_{n \to \infty } (\gamma_n^{-1} \circ \sigma_n^{-1}) (s) = \sigma^{-1}(s).
\]

The function $\sigma^{-1}$ is continuous at $s$ by theorem's assumption, hence there is a unique $t \in \TI$
such that $\sigma(t)=s$. Assume first that $0<t<1$. Let $t_1,t_2 \in \TI$ be such that
$t_1<t<t_2$. Then $\sigma(t_1)<\sigma(t)=s <\sigma(t_2)$.
By equality \eqref{eqn:conv12}, we have
\[
  \lim_{n \to \infty} (\sigma_n \circ  \gamma_n)(t_j)= \sigma(t_j), \quad j=1,2.
\]
Hence, for all $n$ large enough we have
\[
  (\sigma_n \circ  \gamma_n)(t_1) < s < (\sigma_n \circ  \gamma_n)(t_2).
\]
Since the function $\gamma_n^{-1} \circ \sigma_n^{-1}$ is non-decreasing, these inequalities yield
\be \label{eqn:t1t2}
   t_1 \le (\gamma_n^{-1} \circ \sigma_n^{-1})(s) \le t_2.
\ee
By taking the limits, we obtain
\[
   t_1 \le \liminf_{n\to\infty} (\gamma_n^{-1} \circ \sigma_n^{-1})(s)
       \le \limsup_{n\to\infty} (\gamma_n^{-1} \circ \sigma_n^{-1})(s)
   \le t_2.
\]
Finally, letting $t_1\nearrow t$ and $t_2\searrow t$, we get
\[
  \lim_{n\to\infty} (\gamma_n^{-1} \circ \sigma_n^{-1})(s)= t =\sigma^{-1}(s),
\]
as required.

If $t=0$ (resp.\ $t=1$), we can not choose $t_1$ (resp.\ $t_2$) as before. However,
inequality \eqref{eqn:t1t2} obviously holds with $t_1=t=0$ (resp.\ $t=t_2=1$),
and our conclusion for the limit follows as explained above.
\end{proof}

\section{Concluding remarks}

\qquad

{\bf 1.}\ For using the suggested explicit space construction in concrete questions, one
should first of all study the related compactness and tightness criteria.
This might be a line of subsequent research.
\medskip

{\bf 2.}\
All results of the paper remain valid for c\`adl\`ag functions taking values in a complete
separable metric space, e.g.\ in $\R^d$ with $d>1$. The proofs are carried over without any 
change.
\medskip

{\bf 3.}\ Looking beyond probabilistic applications, one may mention {\it regulated functions} 
used in solving differential and integral problems involving discontinuous solutions,
see e.g.~\cite{Cichon, Chis, Fran}. A regulated function is a function having finite
one-sided limits at every point, like a c\`adl\`ag function, but one-sided continuity is not assumed.
Therefore, Skorokhod space is a subspace of the space of regulated functions.
Introducing the regulated turbofunctions along the lines of the present article
seems perfectly possible, although we are not aware of any immediate application
fields for them.

\section*{Acknowledgements}
We are grateful to both anonymous referees for their insightful remarks.
The work of the first author was supported by RFBR-DFG grant 20-51-12004.
The work of the second author was supported in part by RFBR grant 19-01-00356.

\medskip

\textsc{Mikhail A. Lifshits, Saint-Petersburg State University, University Emb.\ 7/9, St Petersburg, 199034,  Russia}

{\it E-mail address}: mikhail@lifshits.org

\medskip

\textsc{Vladislav Vysotsky, University of Sussex, Pevensey 2 Building, Falmer Campus, Brighton BN1 9QH, United Kingdom} and \textsc{St Petersburg Department of Steklov Mathematical Institute, Fontanka~27,  St Petersburg, 191011, Russia}

{\it E-mail address}: v.vysotskiy@sussex.ac.uk


\begin{thebibliography}{99}

\bibitem{Bil}
Billingsley, P. Convergence of probability measures. \emph{John Wiley \& Sons, Inc.},
New York, 1968. 

\bibitem{Cichon}
Cicho\'n, K., Cicho\'n, M., Satco, B.
On regulated functions. \emph{Fasciculi Mathematici}
\textbf{60}, No.\,1, (2018), 37--57. 

\bibitem{Chis}
Chistyakov, V. V. Metric modular spaces: theory and applications. Ser.: Springer
Briefs in Math., \emph{Springer}, 2015.  

\bibitem{Fran}
Fra\v{n}kov\'a, D. Regulated functions. \emph{Mathematica Bohemica} \textbf{116}, No.\,1,
 (1991), 20--59. 

\bibitem{Mogulskii1976}
Mogulskii, A. A. Large deviations for the trajectories of multidimensional random walks.
\emph{Theor. Probab. Appl.} \textbf{21},  No.\,2,  (1976) 300--315. 

\bibitem{Mogulskii1993}
Mogulskii, A. A. Large deviations for processes with independent increments.
\emph{Ann. Probab.} \textbf{21}, No.\,1, (1993) 202--215. 

\bibitem{Skorokhod1956}
Skorokhod, A. V.  Limit theorems for stochastic processes. \emph{Theor. Probab. Appl.}
\textbf{1}, No.\,3,  (1956)  261--290. 

\bibitem{Skorokhod1957}
 Skorokhod, A. V. Limit theorems for stochastic processes with independent increments.
 \emph{Theor. Probab. Appl.}  \textbf{2}, No.2, (1957), 138--171. 

\bibitem{Whitt}
Whitt, W.: Stochastic-process limits. \emph{Springer-Verlag}, New York, 2002.  

\end{thebibliography}
\end{document}